\journal{Arxiv}
\theoremstyle{plain}
\newtheorem{theorem}{Theorem}
\theoremstyle{definition}
\newtheorem{example}{Example}
\begin{document}


\begin{frontmatter}



\title{\LARGE $L_2$-norm posterior contraction in Gaussian models \\ with unknown variance} 


\author{Seonghyun Jeong} 
\ead{sjeong@yonsei.ac.kr}

\affiliation{organization={Department of Statistics and Data Science, Department of Applied Statistics, Yonsei University},
	addressline={50 Yonsei-ro, Seodaemun-gu}, 
	city={Seoul},
	postcode={03722}, 
	country={Republic of Korea}}

\begin{abstract}

The testing-based approach is a fundamental tool for establishing posterior contraction rates. Although the Hellinger metric is attractive owing to the existence of a desirable test function, it is not directly applicable in Gaussian models, because translating the Hellinger metric into more intuitive metrics typically requires strong boundedness conditions. When the variance is known, this issue can be addressed by directly constructing a test function relative to the $L_2$-metric using the likelihood ratio test. However, when the variance is unknown, existing results are limited and rely on restrictive assumptions.
To overcome this limitation, we derive a test function tailored to an unknown variance setting with respect to the $L_2$-metric and provide sufficient conditions for posterior contraction based on the testing-based approach. We apply this result to analyze high-dimensional regression and nonparametric regression.

\end{abstract}



\begin{keyword}
Bayesian nonparametrics \sep High-dimensional regression \sep Nonparametric regression \sep Testing-based posterior contraction 



\end{keyword}

\end{frontmatter}



\section{Introduction}

We suppose that the observation vector $y\in\mathbb R^n$ follows the Gaussian model given by
\begin{align}
y\sim \text{N}_n(\mu_0,\sigma_0^2 I_n),
\label{eqn:model}
\end{align}
where $\mu_0=(\mu_{01},\dots,\mu_{0n})^T\in\mathbb R^n$ and $\sigma_0^2>0$ are the true parameters for data generation. Although $\sigma_0^2$ is typically assumed to be fixed and independent of $n$, we allow it to vary with $n$ for further flexibility.
Several interesting models can be incorporated into this framework by appropriately specifying $\mu_0$. For example, if $\mu_0$ is a sparse vector, one obtains a sparse normal mean model; if $\mu_{0i} = x_i^T \beta_0$ with covariates $x_i \in \mathbb{R}^p$ and a (possibly sparse) coefficient vector $\beta_0 \in \mathbb{R}^p$, the model corresponds to (sparse) regression; and if $\mu_{0i} = f_0(x_i)$ for a function $f_0 : \mathbb R^p \rightarrow \mathbb{R}$, it leads to a nonparametric regression model. Posterior contraction rates in these models have been extensively studied under the assumption that
 $\sigma_0^2$
is known and fixed in the literature  \citep[e.g.,][]{castillo2012needles,castillo2015bayesian,van2008rates,rovckova2020posterior,polson2018posterior}. In contrast, the case of an unknown $\sigma_0^2$ is often addressed in a problem-specific manner \citep[e.g.,][]{yoo2016supremum,ning2020bayesian,jeong2021unified,song2023nearly}.
A key reason for this limitation is that assuming a known $\sigma_0^2$ simplifies the problem setup, thereby allowing for the establishment of optimal posterior contraction rates by directly exploiting the model structure under mild conditions with appropriately tailored priors \citep[e.g.,][]{castillo2012needles,castillo2015bayesian}. Another contributing factor is that, unlike in the known $\sigma_0^2$ case, a test function required for testing-based posterior contraction theory is available only in a somewhat restrictive form.

The testing-based approach to posterior contraction was formalized by \citet{ghosal2000convergence} and \citet{ghosal2007convergence}, showing the fundamental role of tests, as in Schwartz's theory of posterior consistency \citep{schwartz1965bayes}. Typically, though not necessarily, a global test over a sieve---a suitable subset of the parameter space---is constructed from local tests on small subsets that are sufficiently separated from the true parameters, combined with an appropriately controlled covering number, often measured by metric entropy.
Consequently, obtaining such a local test function is crucial. Notably, the Hellinger metric and its averaged version always allow for the construction of test functions with exponentially small type-I and type-II errors, regardless of the specified model formulation \citep{lecam1973convergence,birge1983robust}.
However, using the Hellinger metric is usually unsatisfactory for Gaussian models, because it is not directly related to a more intuitive metric of the parameters without imposing strong boundedness conditions \citep{ghosal2007convergence}. To address this issue, when $\sigma_0^2$ is known, a direct local test can be constructed relative to the $L_2$-norm of $\mu_0$ using the likelihood ratio test \citep{birge2006model}. However, when $\sigma_0^2$ is unknown, available results are limited. Lemma~8.27 of \citet{ghosal2017fundamentals} extends the basic result for the known-variance case, but the testing error is not uniformly controlled over small local subsets, making it difficult to apply the result across the entire parameter space without truncation. Proposition~S2 of \citet{naulet2018some} provides a refined result, but their test is not suitable for achieving optimal contraction rates when $\sigma_0^2$ varies with $n$. This study aims to complement the existing results.

Our primary goal is to establish a test function for the general form in \eqref{eqn:model}. To achieve this, we first derive a local test function that distinguishes the true parameters from alternatives lying in small balls that are sufficiently separated in the $L_2$-norm. Following the standard entropy approach, we combine these local tests with metric entropy bounds to construct a global test over an appropriately chosen sieve. By employing the testing-based strategy, we establish sufficient conditions for $L_2$-norm posterior contraction by ensuring that the prior concentrates on a Kullback-Leibler neighborhood of the true parameters while assigning an exponentially small mass outside the sieve. Unlike \citet{ghosal2017fundamentals} and \citet{naulet2018some}, our results offer a framework for achieving optimal contraction rates with priors supported on $(0,\infty)$ even when $\sigma_0^2$ varies with $n$. Section~\ref{sec:main} presents the main results, and Section~\ref{sec:example} discusses the applications to high-dimensional regression and nonparametric regression. Section~\ref{sec:disc} concludes the study with a discussion.

\section{Main results}
\label{sec:main}
For $\mu\in\mathbb R^n$ and $\sigma^2>0$, let $\mathbb E_{\mu,\sigma}$ denote the expectation operator under $\textnormal{N}_n(\mu,\sigma^2 I_n)$ and $P_{\mu,\sigma}$ be the corresponding probability measure. We denote by $\lVert\cdot\rVert_p$ the $L_p$-norm of a vector. For any $\mu_1,\mu_2\in\mathbb R^n$ and $\sigma_1,\sigma_2\in(0,\infty)$, we define the metric $d$ as
$d^2((\mu_1,\sigma_1),(\mu_2,\sigma_2))=n^{-1}\lVert \mu_1-\mu_2 \rVert_2^2 + |\sigma_1-\sigma_2|^2$, which can be viewed as the $L_2$-distance between the vectors $(n^{-1/2}\mu_1^T,\sigma_1)^T$ and $(n^{-1/2}\mu_2^T,\sigma_2)^T$.
A key contribution of this study is the construction of a test function that distinguishes the true parameters from alternative values that lie in a small ball sufficiently separated from the true parameters with respect to $d$.

\begin{theorem}[Local test]
	\label{lmm:test}
For any $\epsilon\in(0,\sigma_0)$ and any $(\mu_1,\sigma_1)$ satisfying $d((\mu_1,\sigma_1),(\mu_0,\sigma_0))\ge \epsilon$, there exists a test $\varphi_n\in\{0,1\}$ such that, for a universal constant $K>0$,
	\begin{align*}
		\mathbb E_{\mu_0,\sigma_0} \varphi_n \le e^{-Kn\epsilon^2/\sigma_0^2},\quad 	\sup_{(\mu,\sigma):d((\mu,\sigma),(\mu_1,\sigma_1))\le\epsilon/6}\mathbb E_{\mu,\sigma} (1-\varphi_n) \le e^{-Kn\epsilon^2/\sigma_0^2}.
	\end{align*}
\end{theorem}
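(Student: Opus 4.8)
The plan is to build the test by treating the mean and the variance separately and then combining them via the maximum. The starting observation is that the separation $d((\mu_1,\sigma_1),(\mu_0,\sigma_0))\ge\epsilon$ forces at least one of the two components to be large: writing $A=n^{-1}\lVert\mu_1-\mu_0\rVert_2^2$ and $B=|\sigma_1-\sigma_0|^2$, we have $A+B\ge\epsilon^2$, so either $B\ge\epsilon^2/2$ (variance-dominated) or $B<\epsilon^2/2$, in which case $A\ge\epsilon^2/2$ and, since $\epsilon<\sigma_0$, also $\sigma_1<2\sigma_0$ (mean-dominated with a controlled variance). I would construct a test $\varphi_1$ for the mean-dominated regime and a test $\varphi_2$ for the variance-dominated regime, set $\varphi_n=\max(\varphi_1,\varphi_2)$, and use that $\mathbb E_{\mu_0,\sigma_0}\varphi_n\le\mathbb E_{\mu_0,\sigma_0}\varphi_1+\mathbb E_{\mu_0,\sigma_0}\varphi_2$ while $\mathbb E_{\mu,\sigma}(1-\varphi_n)\le\min\{\mathbb E_{\mu,\sigma}(1-\varphi_1),\mathbb E_{\mu,\sigma}(1-\varphi_2)\}$; it then suffices that each test controls its own type-I error and that the regime-appropriate test controls the type-II error over the relevant ball.

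For the mean test I would use the linear statistic $T=v^T(y-\mu_0)$ with $v=(\mu_1-\mu_0)/\lVert\mu_1-\mu_0\rVert_2$, which is $\mathrm N(0,\sigma_0^2)$ under the truth and $\mathrm N(v^T(\mu-\mu_0),\sigma^2)$ under $(\mu,\sigma)$. For $(\mu,\sigma)$ in the $\epsilon/6$-ball around $(\mu_1,\sigma_1)$ one has $v^T(\mu-\mu_0)\ge\sqrt n(\sqrt A-\epsilon/6)$ and $\sigma\le\sigma_1+\epsilon/6$, so rejecting when $T$ exceeds a threshold of order $\sqrt n\,\epsilon$ placed between $0$ and the alternative mean yields Gaussian-tail bounds of order $e^{-cn\epsilon^2/\sigma_0^2}$ on both errors; the inequality $\sigma_1<2\sigma_0$ available in this regime is exactly what keeps the type-II exponent proportional to $n\epsilon^2/\sigma_0^2$ rather than $n\epsilon^2/\sigma_1^2$.

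For the variance test I would work with the residual after projecting out the mean direction: writing $P_{v^\perp}$ for the orthogonal projection onto the complement of $v$, set $U=\lVert P_{v^\perp}(y-\mu_0)\rVert_2^2$, so that $U/\sigma_0^2\sim\chi^2_{n-1}$ under the truth, while under $(\mu,\sigma)$ the variable $U/\sigma^2$ is a noncentral $\chi^2_{n-1}$ whose noncentrality $\lVert P_{v^\perp}(\mu-\mu_1)\rVert_2^2\le n\epsilon^2/36$ is negligible relative to the variance gap. Projecting is the key device, since it decouples the quadratic statistic from the mean deviation, so that $U/n$ concentrates near $\sigma^2$ rather than near $n^{-1}\lVert\mu-\mu_0\rVert_2^2+\sigma^2$; in the variance-dominated regime the resulting gap between $\sigma^2$ and $\sigma_0^2$ is of order $\epsilon\sigma_0$ and its sign is known from $\sigma_1$ versus $\sigma_0$, so a one-sided threshold combined with standard chi-squared deviation (Laurent--Massart) bounds again gives both errors of order $e^{-cn\epsilon^2/\sigma_0^2}$.

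I expect the main obstacle to be the variance test rather than the mean test: the quadratic statistic naturally entangles the mean and variance deviations, and controlling the noncentral chi-squared tails uniformly over the $\epsilon/6$-ball---in particular bounding the type-II error when $\sigma_1<\sigma_0$, where the nonnegative noncentrality pushes $U$ in the unfavorable direction---requires verifying that the variance gap of order $\epsilon\sigma_0$ dominates both the $O(\epsilon^2)$ noncentrality and the $O(\sqrt n\,\sigma_0^2)$ stochastic fluctuations of $\chi^2_{n-1}$. Once the thresholds are fixed at the midpoints of the respective gaps, reconciling a single universal constant $K$ across the two regimes and the two error types is routine bookkeeping.
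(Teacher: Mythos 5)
Your proposal is correct in outline and would yield the theorem, but it takes a genuinely different route from the paper on the variance side. The mean-dominated test is essentially the paper's Case~2: the same linear statistic $(\mu_1-\mu_0)^T(y-\mu_0)$ thresholded between the null and alternative means, with the bound $\sigma_1\lesssim\sigma_0$ in that regime converting the type-II exponent to $n\epsilon^2/\sigma_0^2$. For the variance-dominated regime, however, the paper does not use a quadratic statistic at all: it tests on $n^{-1}\lVert y-\mu_0\rVert_1$, whose mean $\sqrt{2/\pi}\,\sigma$ is \emph{linear} in $\sigma$ and whose sub-Gaussian fluctuations scale as $\sigma\sqrt{t/n}$, so the gap-versus-fluctuation comparison gives $t\asymp n\epsilon^2/\sigma^2$ directly; the mean contamination is absorbed by the triangle inequality $n^{-1}\lVert\mu-\mu_0\rVert_1\le n^{-1/2}\lVert\mu-\mu_0\rVert_2$, which is why the paper restricts that regime to $7\lVert\mu_1-\mu_0\rVert_2^2\le n|\sigma_1-\sigma_0|^2$ and handles the complementary large-$\lVert\mu_1-\mu_0\rVert_2$ portion with the mean test. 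Your projected chi-squared statistic $U=\lVert P_{v^\perp}(y-\mu_0)\rVert_2^2$ is an elegant alternative that removes the need for that restriction (the projection annihilates the whole $\mu_1-\mu_0$ component, however large), at the cost of working with a statistic whose gap is $n|\sigma^2-\sigma_0^2|\asymp n\epsilon\sigma_0$ but whose fluctuations scale as $\sigma^2\sqrt{nt}$. The one sub-case you gloss over is $\sigma\gg\sigma_0$ (your variance-dominated regime places no upper bound on $\sigma_1$): there the fluctuation $\sigma^2\sqrt{nt}$ is not dominated by $n\epsilon\sigma_0$ at the target level $t\asymp n\epsilon^2/\sigma_0^2$, and you must instead fall back on the crude lower-tail bound $\Pr\{\chi^2_{n-1}\le n/2\}\le e^{-cn}\le e^{-cn\epsilon^2/\sigma_0^2}$ (using $\epsilon<\sigma_0$) --- exactly the content of the paper's separate Case~1 with the test $\mathbbm 1\{\lVert y-\mu_0\rVert_2^2/\sigma_0^2>M_0^2n\}$. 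With that sub-case added, plus the trivial fix for $\mu_1=\mu_0$ where $v$ is undefined, your two-regime max construction goes through and is arguably cleaner than the paper's four-case split.
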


\begin{proof}
	Let $y\in\mathbb R^n$ be a Gaussian random vector.
	For $M_1>1$, we separate the range of the parameters as follows.

	{\it Case 1: $\sigma_1\ge M_1\sigma_0$.}
	We define the test function $\varphi_{1,n}=\mathbbm 1\{\lVert y-\mu_0 \rVert_2^2/\sigma_0^2 > M_0^2 n\}$ for $M_0>0$.
	Under the law $y\sim P_{\mu_0,\sigma_0}$, $\lVert y-\mu_0 \rVert_2^2/\sigma_0^2 $ has a chi-squared distribution with $n$ degrees of freedom. Let $\chi_{k,\gamma}^2$ denote a chi-squared random variable with $k$ degrees of freedom and noncentrality parameter $\gamma$. It is known \citep[Lemma~1]{laurent2000adaptive} that chi-squared distributions satisfy the tail bounds: for any $t>0$, $\Pr\{\chi_{k,0}^2-k\ge 2\sqrt{kt} + 2t\}\le e^{-t}$ and $\Pr\{\chi_{k,0}^2-k\le -2\sqrt{kt}\}\le e^{-t}$.
	Therefore, if $M_0$ is sufficiently large, it follows that $\mathbb E_{\mu_0,\sigma_0}\varphi_{1,n}\le e^{-K_1n}\le e^{-K_1n\epsilon^2/\sigma_0^2}$ for some $K_1>0$, since $\epsilon\in(0,\sigma_0)$.
	Under the law $y\sim P_{\mu,\sigma}$, $\lVert y-\mu_0 \rVert_2^2/\sigma^2 $ has a noncentral chi-squared distribution with $n$ degrees of freedom and noncentrality parameter $\lVert \mu-\mu_0 \rVert_2^2/\sigma^2$. Thus, we obtain
$	\mathbb E_{\mu,\sigma}(1-\varphi_{1,n}) =\Pr\{\chi_{n,\lVert \mu-\mu_0 \rVert_2^2/\sigma^2}^2 \le  {\sigma_0^2 M_0^2 n}/{\sigma^2} \}
\le \Pr\{\chi_{n,0}^2\le {\sigma_0^2 M_0^2 n}/{\sigma^2}\}$,
	where we have used the fact that $\Pr\{\chi_{n,a}^2\le \cdot\}\le \Pr\{\chi_{n,b}^2\le \cdot\}$ for every $a\ge b$ (by the monotonicity of the Marcum Q-function). 
	We obtain $\sigma\ge \sigma_1 -|\sigma-\sigma_1|\ge M_1 \sigma_0 -\epsilon/6\ge M_1 \sigma_0 -\sigma_0/6\ge M_1\sigma_0/2$.
	Therefore, once $M_1$ is chosen to be sufficiently large to dominate $M_0$, the tail bound of the chi-squared distribution gives $\mathbb E_{\mu,\sigma}(1-\varphi_{1,n})\le e^{-K_2n}$ for some $K_2>0$, which is further bounded by $e^{-K_2n\epsilon^2/\sigma_0^2}$ since $\epsilon\in(0,\sigma_0)$.

	{\it Case 2: $\sigma_1 < M_1\sigma_0$ and $7\lVert\mu_1-\mu_0\rVert_2^2 > n |\sigma_1-\sigma_0|^2$.} We define the test
	$\varphi_{2,n}=\mathbbm 1\{(\mu_1-\mu_0)^T (y-\mu_0) > \lVert \mu_1-\mu_0\rVert_2^2/2 \}$.
	Since $8n^{-1}\lVert \mu_1-\mu_0\rVert_2^2\ge n^{-1}\lVert \mu_1-\mu_0\rVert_2^2 + |\sigma_1-\sigma_0|^2\ge \epsilon^2$, we easily obtain $\mathbb E_{\mu_0,\sigma_0} \varphi_{2,n}=\Phi(-\lVert \mu_1-\mu_0\rVert_2/(2\sigma_0))\le e^{- K_3 n \epsilon^2/\sigma_0^2}$ for some $K_3>0$, using the Gaussian concentration inequality. 
	For the type-II error, observe that
	$2(\mu_1-\mu_0)^T(\mu-\mu_0)=\lVert \mu_1-\mu_0\rVert_2^2+\lVert \mu-\mu_0\rVert_2^2-\lVert \mu-\mu_1\rVert_2^2$ and
	$ \lVert \mu-\mu_0\rVert_2^2 \ge \lVert\mu_1-\mu_0\rVert_2^2/2-\lVert \mu-\mu_1\rVert_2^2$. 
	It follows that
	\begin{align*}
		\mathbb E_{\mu,\sigma}(1-\varphi_{2,n})
		&=P_{\mu,\sigma}\!\left\{ (\mu_1-\mu_0)^T (y-\mu)\le \lVert\mu-\mu_1\rVert_2^2/2 - \lVert\mu-\mu_0\rVert_2^2/2 \right\}\\
		&\le P_{\mu,\sigma}\!\left\{ \frac{(\mu_1-\mu_0)^T (y-\mu)}{\sigma\lVert\mu_1-\mu_0\rVert_2}\le \frac{\lVert\mu-\mu_1\rVert_2^2}{\sigma\lVert\mu_1-\mu_0\rVert_2}- \frac{\lVert\mu_1-\mu_0\rVert_2}{4\sigma} \right\}.
	\end{align*}
	As $\lVert\mu_1-\mu_0\rVert_2^2\ge n\epsilon^2/8$ and $\lVert\mu-\mu_1\rVert_2^2\le n \epsilon^2/36$, the rightmost side is bounded by $\Phi(-\sqrt{n}\epsilon /(72\sqrt{2}\sigma))$. Using the inequality $\sigma\le |\sigma-\sigma_1|+\sigma_1\le \epsilon/6 + M_1\sigma_0\le 2M_1\sigma_0$, the probability is further bounded by $\Phi(-\sqrt{n}\epsilon /(144\sqrt{2} M_1\sigma_0))\le e^{- K_4 n \epsilon^2/\sigma_0^2}$ for some $K_4>0$.
	
	{\it Case 3: $\sigma_0\le\sigma_1 < M_1 \sigma_0$ and $7\lVert\mu_1-\mu_0\rVert_2^2 \le n |\sigma_1-\sigma_0|^2$.} We define the test $\varphi_{3,n}=\mathbbm 1\{n^{-1}\lVert y-\mu_0 \rVert_1-\sqrt{2/\pi}\sigma_0> \sqrt{2/\pi}(\sigma_1-\sigma_0)/12\}$. Let $z \sim \text{N}_n(0_n,I_n)$ and observe that $\mathbb E\lVert z \rVert_1=n\sqrt{2/\pi}$. Then,
	\begin{align*}
		\mathbb E_{\mu_0,\sigma_0} \varphi_{3,n} = \Pr\!\left\{\lVert z \rVert_1-\mathbb E \lVert z \rVert_1> \frac{\sqrt{2/\pi}n}{12\sigma_0}(\sigma_1-\sigma_0)\right\}\le \Pr\!\left\{\lVert z \rVert_1-\mathbb E \lVert z \rVert_1> \frac{\sqrt{7}n\epsilon}{24\sqrt{\pi}\sigma_0}\right\},
	\end{align*}
	since $\sigma_1\ge \sigma_0$ and $(1+1/7)|\sigma_1-\sigma_0|^2 \ge n^{-1}\lVert\mu_1-\mu_0\rVert_2^2 + |\sigma_1-\sigma_0|^2\ge \epsilon^2$. Note that $\lVert z \rVert_1-\mathbb E \lVert z \rVert_1$ is the sum of independent sub-Gaussian random variables with mean zero. It thus satisfies the concentration inequality of the form $\Pr\{|\lVert z \rVert_1-\mathbb E \lVert z\rVert_1 |>t\}\le e^{-K't^2/n}$ for any $t>0$ and some $K'>0$ \citep[Proposition 2.5]{wainwright2019high}. This gives $\mathbb E_{\mu_0,\sigma_0} \varphi_{3,n}\le e^{-K_5n\epsilon^2/\sigma_0^2}$ for some $K_5>0$.
	Observe also that 
	\begin{align*}
		\mathbb E_{\mu,\sigma}(1-\varphi_{3,n})&
	\le P_{\mu,\sigma}\!\left\{ n^{-1}\lVert y -\mu \rVert_1-\sqrt{2/\pi}\sigma_0 \le \frac{\sqrt{2/\pi}}{12}(\sigma_1-\sigma_0) + n^{-1/2} \lVert \mu-\mu_0 \rVert_2 \right\}\\
		& = \Pr\!\left\{ \lVert z \rVert_1-\mathbb E \lVert z \rVert_1 \le \frac{n}{\sigma}\left[\frac{\sqrt{2/\pi}}{12}(\sigma_1-\sigma_0) + \sqrt{2/\pi}(\sigma_0-\sigma) +n^{-1/2}\lVert \mu-\mu_0 \rVert_2 \right] \right\}.
	\end{align*}
	The term in the bracket is bounded by
	\begin{align*}
			&-\frac{11}{12} \sqrt{2/\pi} (\sigma_1-\sigma_0) + \sqrt{2/\pi} (\sigma_1-\sigma) + n^{-1/2}\lVert \mu_1-\mu_0 \rVert_2 + n^{-1/2}\lVert \mu-\mu_1 \rVert_2\\
			&\quad\le\left(-\frac{11}{12}\sqrt{2/\pi} +\frac{1}{\sqrt{7}}\right) (\sigma_1-\sigma_0) + n^{-1/2}\lVert \mu-\mu_1 \rVert_2+|\sigma-\sigma_1| \\
			&\quad\le -\frac{7}{20}(\sigma_1-\sigma_0)+\sqrt{2(n^{-1}\lVert \mu-\mu_1\rVert_2^2 + |\sigma-\sigma_1|^2)},
	\end{align*}
where we use the inequality $\sigma_1\ge \sigma_0$.
	Using the fact that $(1+1/7)|\sigma_1-\sigma_0|^2 \ge \epsilon^2$, the rightmost side can be further bounded by  $[-(7/20)\sqrt{7/8}+\sqrt{2}/6]\epsilon < -\epsilon/11$. Therefore, using the inequality $\sigma\le 2M_1\sigma_0$ (as in Case~2), we obtain $\mathbb E_{\mu,\sigma}(1-\varphi_{3,n})\le \Pr\{ \lVert z \rVert_1-\mathbb E \lVert z \rVert_1 \le - n\epsilon/(22 M_1\sigma_0)\}\le e^{-K_6n\epsilon^2/\sigma_0^2}$ for some $K_6>0$.

	{\it Case 4: $\sigma_0 > \sigma_1 $ and $7\lVert\mu_1-\mu_0\rVert_2^2 \le n |\sigma_1-\sigma_0|^2$.}
	Define the test function
	$\varphi_{4,n}=\mathbbm 1\{n^{-1}\lVert y-\mu_0 \rVert_1-\sqrt{2/\pi}\sigma_0\le \sqrt{2/\pi}(\sigma_1-\sigma_0)/12 \}$. 
	Since $\sigma_0>\sigma_1$ and $(1+1/7)|\sigma_1-\sigma_0|^2 \ge \epsilon^2$,
	we obtain $\mathbb E_{\mu_0,\sigma_0} \varphi_{4,n} 
	\le \Pr\{\lVert z \rVert_1-\mathbb E \lVert z \rVert_1 	\le {-\sqrt{7}n\epsilon}/(24\sqrt{\pi}\sigma_0)\}\le e^{-K_7n\epsilon^2/\sigma_0^2}$ for some $K_7>0$, similar to Case~3. Also observe that
	\begin{align*}
		\mathbb E_{\mu,\sigma}(1-\varphi_{4,n}) 
		&\le \Pr\!\left\{ \lVert z \rVert_1-\mathbb E \lVert z \rVert_1 > \frac{n}{\sigma}\left[\frac{-\sqrt{2/\pi}}{12}(\sigma_0-\sigma_1)  + \sqrt{2/\pi}(\sigma_0-\sigma)  - n^{-1/2}\lVert \mu-\mu_0 \rVert_2 \right] \right\}.
	\end{align*}
	Similar to Case~3, using $\sigma_0>\sigma_1$, the term in the bracket is bounded below by
	\begin{align*}
		&\frac{11}{12} \sqrt{2/\pi} (\sigma_0-\sigma_1) + \sqrt{2/\pi} (\sigma_1-\sigma) - n^{-1/2}\lVert \mu_1-\mu_0 \rVert_2 - n^{-1/2}\lVert \mu-\mu_1 \rVert_2\\
		&\quad \ge  \frac{7}{20}(\sigma_0-\sigma_1)-\sqrt{2(n^{-1}\lVert \mu-\mu_1 \rVert_2^2 + |\sigma-\sigma_1|^2)}\\
		&\quad \ge \epsilon/11.
	\end{align*}
	Hence, using the inequality $\sigma \le |\sigma-\sigma_1|+\sigma_1 <  \epsilon/6 +\sigma_0 \le 7\sigma_0/6$, we have $\mathbb E_{\mu,\sigma}(1-\varphi_{4,n})\le \Pr\{ \lVert z \rVert_1-\mathbb E \lVert z \rVert_1 > 6n\epsilon/(77\sigma_0) \}\le e^{-K_8n\epsilon^2/\sigma_0^2}$ for some $K_8>0$.
	
	We can construct the test function $\varphi_n$ by choosing from $\varphi_{1,n}$ through $\varphi_{4,n}$, depending on the values of $(\mu_0,\sigma_0)$ and $(\mu_1,\sigma_1)$.
\end{proof}

	We compare our testing errors with those of \citet{ghosal2017fundamentals} and \citet{naulet2018some}. The testing error in Lemma~8.27 of \citet{ghosal2017fundamentals} involves $\sigma_1^2$, requiring the prior for $\sigma^2$ to be appropriately truncated. As a result, commonly used priors for $\sigma^2$ supported on $(0,\infty)$, such as inverse gamma priors, cannot be used directly. The testing error in Proposition~S2 of \citet{naulet2018some} is free of $\sigma_1^2$ but does not decay at a rate proportional to $\sigma_0^{-2}$. Therefore, their test function is not suitable for settings in which $\sigma_0^2$ varies with $n$. (Note that they measure the discrepancy between standard deviations on a logarithmic scale.) In contrast, our decay rate, given by $n\epsilon^2/\sigma_0^2$, properly accounts for such cases, as shown in Section~\ref{sec:example}.

It should be noted that Theorem~\ref{lmm:test} provides a test for $\epsilon$-separated balls only when $\epsilon\in(0,\sigma_0)$. This contrasts with the Hellinger metric and the $L_2$-metric in Gaussian models with known variance, which allow test functions for $\epsilon$-separated balls for every $\epsilon>0$. Consequently, Theorem~\ref{lmm:test} cannot be used for the general `shell approach' \citep[Theorem 8.12]{ghosal2017fundamentals} to obtain the refined version of contraction theory (see, e.g., Theorem~2.4 of \citet{ghosal2000convergence} or Theorem~8.11 of \citet{ghosal2017fundamentals}). Nevertheless, when combined with the metric entropy requirement, Theorem~\ref{lmm:test} is useful for constructing a global test over a sieve for the basic form of contraction theory (see, e.g., Theorem~2.1 of \citet{ghosal2000convergence} or Theorem~8.9 of \citet{ghosal2017fundamentals}). Below, we denote by $N(\epsilon,\mathcal E, \rho)$ the $\epsilon$-covering number of a semi-metric space $\mathcal E$ with respect to a semi-metric $\rho$.

\begin{theorem}[Global test]
	\label{lmm:globaltest}
	For a positive sequence $\epsilon_n>0$ such that $\epsilon_n / \sigma_0\rightarrow 0$ and $n\epsilon_n^2/\sigma_0^2\rightarrow\infty$, and a sieve $\Theta_n\subset \mathbb R^n\times (0,\infty)$,
	suppose that $N(\epsilon_n,\Theta_n,d)\le e^{D n\epsilon_n^2/\sigma_0^2}$ for some constant $D>0$.
	Then, for every $M>\max\{\sqrt{D/K},6\}$ with $K$ as in Theorem~\ref{lmm:test}, there exists a test $\varphi_n^\ast\in\{0,1\}$ such that
	\begin{align*}
		\mathbb E_{\mu_0,\sigma_0} \varphi_n^\ast \rightarrow 0,\quad 	\sup_{(\mu,\sigma)\in\Theta_n:d((\mu,\sigma),(\mu_0,\sigma_0))\ge M\epsilon_n}\mathbb E_{\mu,\sigma} (1-\varphi_n^\ast) \le e^{-KM^2n\epsilon_n^2/\sigma_0^2}.
	\end{align*}
\end{theorem}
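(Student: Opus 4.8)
The plan is to run the standard local-to-global testing argument: cover the separated region by finitely many small balls, attach to each the local test of Theorem~\ref{lmm:test}, and take their maximum, controlling the type-I error by a union bound weighted by the covering number. Write $A_n=\{(\mu,\sigma)\in\Theta_n : d((\mu,\sigma),(\mu_0,\sigma_0))\ge M\epsilon_n\}$ for the alternative region. The single most important decision is to use one fixed separation scale $\epsilon=M\epsilon_n$ throughout, rather than the actual (possibly large) distance of each alternative. Since $\epsilon_n/\sigma_0\to0$, we have $M\epsilon_n<\sigma_0$ for all large $n$, so $\epsilon=M\epsilon_n$ is an admissible radius for Theorem~\ref{lmm:test}.

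First I would cover $A_n$ by $d$-balls of radius $M\epsilon_n/6$ centered at points $(\mu_j,\sigma_j)\in A_n$, $j=1,\dots,N$. Because $M>6$, this radius exceeds $\epsilon_n$, so by the monotonicity of covering numbers in the radius (taking the centers in $A_n$) the number of balls obeys $N\le N(M\epsilon_n/6,\Theta_n,d)\le N(\epsilon_n,\Theta_n,d)\le e^{Dn\epsilon_n^2/\sigma_0^2}$. Each center lies in $A_n$, hence at distance at least $M\epsilon_n=\epsilon$ from $(\mu_0,\sigma_0)$, so Theorem~\ref{lmm:test} supplies a local test $\varphi_{n,j}$ with $\mathbb E_{\mu_0,\sigma_0}\varphi_{n,j}\le e^{-KM^2n\epsilon_n^2/\sigma_0^2}$ whose type-II bound $e^{-KM^2n\epsilon_n^2/\sigma_0^2}$ holds uniformly over the ball of radius $\epsilon/6=M\epsilon_n/6$ about $(\mu_j,\sigma_j)$, i.e. over the whole covering ball. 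Setting $\varphi_n^\ast=\max_{1\le j\le N}\varphi_{n,j}$, a union bound gives $\mathbb E_{\mu_0,\sigma_0}\varphi_n^\ast\le N e^{-KM^2n\epsilon_n^2/\sigma_0^2}\le e^{(D-KM^2)n\epsilon_n^2/\sigma_0^2}$, which tends to $0$ because $M>\sqrt{D/K}$ and $n\epsilon_n^2/\sigma_0^2\to\infty$. For the type-II error, every $(\mu,\sigma)\in A_n$ lies in some covering ball $B_j$, whence $1-\varphi_n^\ast\le 1-\varphi_{n,j}$ and $\mathbb E_{\mu,\sigma}(1-\varphi_n^\ast)\le e^{-KM^2n\epsilon_n^2/\sigma_0^2}$ uniformly, as required.

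The hard part is conceptual rather than computational: because Theorem~\ref{lmm:test} only furnishes tests for separations $\epsilon\in(0,\sigma_0)$, one cannot run the usual shell (peeling) refinement over the dyadic ranges $d\asymp 2^k M\epsilon_n$, and in particular cannot let the separation track the true distance of far alternatives. Pinning the separation at the single value $\epsilon=M\epsilon_n$ circumvents this: Theorem~\ref{lmm:test} only requires an alternative at distance \emph{at least} $\epsilon$, so it applies verbatim to arbitrarily distant points of $A_n$, and a larger true distance only sharpens both error bounds while the covered radius stays fixed at $M\epsilon_n/6$. The remaining delicate point is the bookkeeping of constants---ensuring the covering radius $M\epsilon_n/6$ simultaneously (i) exceeds $\epsilon_n$ so that the given entropy bound applies and (ii) equals $\epsilon/6$ so that each local test blankets its covering ball, while the centers sit at distance $\ge M\epsilon_n$ so that the type-I contribution is uniformly $e^{-KM^2n\epsilon_n^2/\sigma_0^2}$. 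This is precisely what the hypotheses $M>6$ and $M>\sqrt{D/K}$ deliver.
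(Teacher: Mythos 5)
Your proposal is correct and follows essentially the same argument as the paper: cover the $M\epsilon_n$-separated region by balls of radius $M\epsilon_n/6$, attach the local test of Theorem~\ref{lmm:test} with fixed separation $\epsilon=M\epsilon_n$ to each center, take the maximum, and control the type-I error via the union bound together with the entropy condition (using $M>6$ to compare covering radii and $M>\sqrt{D/K}$ to make the exponent negative). No substantive differences from the paper's proof.
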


\begin{proof}
	Let $\{\mathcal C_1,\dots,\mathcal C_{N_n}\}$ be an $(M\epsilon_n/6)$-cover of $\{(\mu,\sigma)\in\Theta_n:d((\mu,\sigma),(\mu_0,\sigma_0))\ge M\epsilon_n\}$ with respect to the metric $d$, where $N_n$ denotes the covering number. Then, for each $\mathcal C_j$, with $\epsilon=M\epsilon_n$, the test from Theorem~\ref{lmm:test} can be constructed, provided that $n$ is sufficiently large so that $M\epsilon_n\le \sigma_0$. We denote each of these tests by $\varphi_{n,j}$, and define the combined test as $\varphi_n^\ast = \max_{1\le j\le N_n} \varphi_{n,j}$. Because
	\begin{align*}
		N_n=N\!\left(\frac{M\epsilon_n}{6},\left\{(\mu,\sigma)\in\Theta_n:d((\mu,\sigma),(\mu_0,\sigma_0))\ge M\epsilon_n\right\},d\right)\le N(\epsilon_n,\Theta_n,d),
	\end{align*}
	whenever $M > 6$, it follows that
$
		\mathbb E_{\mu_0,\sigma_0} \varphi_n^\ast \le \sum_{j=1}^{N_n}\mathbb E_{\mu_0,\sigma_0} \varphi_{n,j} \le N(\epsilon_n,\Theta_n,d)e^{-KM^2n\epsilon_n^2/\sigma_0^2}\le e^{-(KM^2-D)n\epsilon_n^2/\sigma_0^2}
$.
	The rightmost side goes to zero whenever $KM^2>D$.
	On the other hand,
	\begin{align*}
		\sup_{(\mu,\sigma)\in\Theta_n:d((\mu,\sigma),(\mu_0,\sigma_0))\ge M\epsilon_n}\mathbb E_{\mu,\sigma} (1-\varphi_n^\ast) 
		& \le \max_{1\le j\le N_n} \sup_{(\mu,\sigma)\in \mathcal C_j}\mathbb E_{\mu,\sigma} (1-\varphi_n^\ast)
		\\
		&\le \max_{1\le j \le N_n} \sup_{(\mu,\sigma)\in \mathcal C_j}\mathbb E_{\mu,\sigma} (1-\varphi_{n,j})
		\\
		&\le e^{-KM^2n\epsilon_n^2/\sigma_0^2},
	\end{align*}
	which concludes the proof.
\end{proof}	

The global test in Theorem \ref{lmm:globaltest} allows us to apply the testing-based approach for $L_2$-norm posterior contraction in the Gaussian model given by \eqref{eqn:model}. The following theorem builds on the results in \citet{ghosal2000convergence} and \citet{ghosal2007convergence}. 

\begin{theorem}[Posterior contraction]
	\label{thm:cont}
	For $\epsilon_n>0$ such that $\epsilon_n/\sigma_0\rightarrow 0$ and  $n\epsilon_n^2/\sigma_0^2\rightarrow \infty$, suppose that there exists a sieve $\Theta_n\subset \mathbb R^n\times (0,\infty)$ such that for some constants $C>0$, $D>0$, and a sufficiently large constant $E>0$,	
	\begin{align}
		\begin{split}
		\Pi\{\lVert\mu-\mu_0\rVert_2^2\le n\epsilon_n^2, \, |\sigma^2-\sigma_0^2|\le \sigma_0\epsilon_n \}&\ge e^{-Cn\epsilon_n^2/\sigma_0^2},\\
N(\epsilon_n,\Theta_n,d)&\le e^{Dn\epsilon_n^2/\sigma_0^2},\\
\Pi\{(\mu,\sigma) \notin \Theta_n\} &\le e^{-En\epsilon_n^2/\sigma_0^2}.
		\end{split}
\label{eqn:cond0}
	\end{align}
	Then, there exists a constant $M>0$ such that the posterior satisfies
	$\mathbb E_{\mu_0,\sigma_0}\Pi\{(\mu,\sigma):d((\mu,\sigma),(\mu_0,\sigma_0))\ge M\epsilon_n\mid y\}\rightarrow 0$.
\end{theorem}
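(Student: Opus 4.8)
The plan is to follow the classical testing-based argument of \citet{ghosal2000convergence}, writing the posterior mass of $A_n=\{(\mu,\sigma):d((\mu,\sigma),(\mu_0,\sigma_0))\ge M\epsilon_n\}$ as a ratio of integrated likelihood ratios, controlling the numerator with the global test of Theorem~\ref{lmm:globaltest} and bounding the denominator from below via the prior concentration condition. Throughout I write $LR(\mu,\sigma)=(p_{\mu,\sigma}/p_{\mu_0,\sigma_0})(y)$ and $Z_n=\int LR\,d\Pi$.

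First I would translate the first line of \eqref{eqn:cond0} into a Kullback--Leibler statement. Setting $B_n=\{\|\mu-\mu_0\|_2^2\le n\epsilon_n^2,\,|\sigma^2-\sigma_0^2|\le\sigma_0\epsilon_n\}$, I would use the Gaussian formulas $K(\mu_0,\sigma_0;\mu,\sigma)=\tfrac{n}{2}[\log(\sigma^2/\sigma_0^2)+\sigma_0^2/\sigma^2-1]+\|\mu-\mu_0\|_2^2/(2\sigma^2)$ for the divergence and $V(\mu_0,\sigma_0;\mu,\sigma)=\sigma_0^2\|\mu-\mu_0\|_2^2/\sigma^4+\tfrac{n}{2}(\sigma_0^2/\sigma^2-1)^2$ for the log-likelihood-ratio variance. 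On $B_n$, writing $\sigma^2=\sigma_0^2(1+\delta)$ with $|\delta|\le\epsilon_n/\sigma_0\to0$, the second-order cancellation $\log(1+\delta)+(1+\delta)^{-1}-1\asymp\delta^2$ shows the variance contribution to $K$ is $O(n\delta^2)=O(n\epsilon_n^2/\sigma_0^2)$, while the mean contribution is $\le n\epsilon_n^2/(2\sigma^2)=O(n\epsilon_n^2/\sigma_0^2)$; the same expansion bounds $V$. The assumption $\epsilon_n/\sigma_0\to0$ is essential here, forcing $\sigma/\sigma_0\to1$ so the expansions are valid. Hence both $\sup_{B_n}K$ and $\sup_{B_n}V$ are at most a constant multiple of $n\epsilon_n^2/\sigma_0^2$.

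Next I would establish the evidence lower bound. Restricting to $B_n$, applying Jensen to $\log\int_{B_n}LR\,d\bar\Pi_n$ with $\bar\Pi_n$ the prior renormalized on $B_n$, and writing $W=\int_{B_n}\log LR\,d\bar\Pi_n$, I have $\mathbb E_{\mu_0,\sigma_0}W=-\int_{B_n}K\,d\bar\Pi_n\ge-\sup_{B_n}K$ and, by Jensen again, $\operatorname{Var}_{\mu_0,\sigma_0}(W)\le\sup_{B_n}V$. A Chebyshev bound with deviation $t=n\epsilon_n^2/\sigma_0^2$ yields $P_{\mu_0,\sigma_0}(W<\mathbb E_{\mu_0,\sigma_0}W-t)\le\sup_{B_n}V/t^2=O((n\epsilon_n^2/\sigma_0^2)^{-1})\to0$, using $n\epsilon_n^2/\sigma_0^2\to\infty$. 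On the complementary event $\mathcal A_n$, which has probability tending to one, $Z_n\ge\Pi(B_n)e^{W}\ge e^{-(C+c_1)n\epsilon_n^2/\sigma_0^2}$ for a universal constant $c_1$. I would then bound $\Pi(A_n\mid y)\le\varphi_n^\ast+(1-\varphi_n^\ast)\Pi(A_n\mid y)$, split $A_n$ into $A_n\cap\Theta_n$ and $\Theta_n^c$, and take expectations on $\mathcal A_n$. The type-I error $\mathbb E_{\mu_0,\sigma_0}\varphi_n^\ast\to0$ is immediate from Theorem~\ref{lmm:globaltest}. For the sieve piece, Fubini turns $\mathbb E_{\mu_0,\sigma_0}[(1-\varphi_n^\ast)LR]$ into the type-II error $\mathbb E_{\mu,\sigma}(1-\varphi_n^\ast)\le e^{-KM^2n\epsilon_n^2/\sigma_0^2}$, so after dividing by the lower bound on $Z_n$ this piece is $\le e^{-(KM^2-C-c_1)n\epsilon_n^2/\sigma_0^2}$, vanishing once $M>\sqrt{(C+c_1)/K}$. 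For the off-sieve piece, Fubini and $\mathbb E_{\mu_0,\sigma_0}[LR]=1$ reduce the numerator to $\Pi(\Theta_n^c)\le e^{-En\epsilon_n^2/\sigma_0^2}$, giving $\le e^{-(E-C-c_1)n\epsilon_n^2/\sigma_0^2}$, which vanishes once $E$ exceeds $C+c_1$---this is the role of the ``sufficiently large'' $E$. On $\mathcal A_n^c$ I would bound $\Pi(A_n\mid y)\le1$, contributing $P_{\mu_0,\sigma_0}(\mathcal A_n^c)\to0$. Taking $M>\max\{6,\sqrt{D/K},\sqrt{(C+c_1)/K}\}$ makes every term vanish.

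The main obstacle I anticipate is the first step: verifying that $B_n$, described concretely through $|\sigma^2-\sigma_0^2|\le\sigma_0\epsilon_n$, genuinely lies inside a Kullback--Leibler ball of radius $O(n\epsilon_n^2/\sigma_0^2)$. The delicate point is that both $K$ and $V$ must scale like $\sigma_0^{-2}$ in the variance direction, and this rests entirely on the $\epsilon_n/\sigma_0\to0$ regime through the second-order cancellation in $\log(1+\delta)+(1+\delta)^{-1}-1$. Without this cancellation the divergence would not match the $n\epsilon_n^2/\sigma_0^2$ exponent produced by Theorem~\ref{lmm:globaltest}, and the exponents in the final comparison of numerator and denominator would fail to cancel.
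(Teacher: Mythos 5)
Your proposal is correct and follows essentially the same route as the paper: the same Gaussian formulas for $K$ and $V$ with the second-order expansion in $\sigma^2/\sigma_0^2-1$ to place the concrete prior ball inside a Kullback--Leibler neighborhood of radius $O(n\epsilon_n^2/\sigma_0^2)$, the same evidence lower bound on a high-probability event (you re-derive via Jensen and Chebyshev what the paper simply cites as Lemma~8.21 of \citet{ghosal2017fundamentals}), and the same three-way decomposition into the type-I error of the global test, the off-event contribution, and the Fubini-controlled on-sieve and off-sieve pieces. No substantive differences.
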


\begin{proof}
Our proof follows \citet{ghosal2000convergence} and \citet{ghosal2007convergence}. For $\delta>0$, we define the set
	\begin{align}
		\begin{split}
	\mathcal A_n(\delta)=\Bigg\{y\in\mathbb R^n: &\int \frac{p_{\mu,\sigma}}{p_{\mu_0,\sigma_0}}(y)d\Pi(\mu,\sigma) \\
	&\ge 	\Pi\!\left\{K(P_{\mu_0,\sigma_0},P_{\mu,\sigma})\le n\delta^2,V(P_{\mu_0,\sigma_0},P_{\mu,\sigma})\le n\delta^2\right\}e^{-2n\delta^2}\Bigg\},
	\label{eqn:kl}
		\end{split}
	\end{align}
where $K(P,Q) = \int \log({dP}/{dQ})dP$ and $V(P,Q) = \int(\log({dP}/{dQ})-K(P,Q))^2dP$.
For any $\delta\ge n^{-1/2}$, it is well known that $P_{\mu_0,\sigma_0}(\mathcal A_n(\delta))\ge 1-(4n\delta^2)^{-1}$; see, for example, Lemma~8.21 of \citet{ghosal2017fundamentals}.
Using the direct calculations of the Kullback-Leibler divergence $K$ and second-order variation $V$ (see, e.g., Theorem 9 of \citet{jeong2021unified}), and applying the Taylor expansion $\log(1+w)=w+O(w^2)$ and $1-(1+w)^{-1}=w+O(w^2)$ for $w=\sigma^2/\sigma_0^2-1$, which are valid when $\sigma^2/\sigma_0^2\rightarrow 1$,
we obtain
\begin{align*}
	K(P_{\mu_0,\sigma_0},P_{\mu,\sigma}) &= \frac{n}{2}\log\left(\frac{\sigma^2}{\sigma_0^2}\right)
	-\frac{n}{2}\left(1 -\frac{\sigma_0^2}{\sigma^2}\right) 
	+\frac{\lVert \mu - \mu_0\rVert _2^2}{2\sigma^2} = nO(w^2)+\frac{\lVert \mu - \mu_0\rVert _2^2}{2\sigma^2},\\
	V(P_{\mu_0,\sigma_0},P_{\mu,\sigma})	& = \frac{n}{2}\left(1 -\frac{\sigma_0^2}{\sigma^2}\right)^2 +\frac{\sigma_0^2\lVert \mu - \mu_0\rVert _2^2}{\sigma^4} = \frac{nw^2}{2}+ nO(w^3)+\frac{\sigma_0^2\lVert \mu - \mu_0\rVert _2^2}{\sigma^4}.
\end{align*}
	Therefore, for some $\tilde C>0$, we obtain
	$\Pi\{K(P_{\mu_0,\sigma_0},P_{\mu,\sigma})\le \tilde C n\epsilon_n^2/\sigma_0^2,V(P_{\mu_0,\sigma_0},P_{\mu,\sigma})\le \tilde C n\epsilon_n^2/\sigma_0^2\}\ge\Pi\{\lVert\mu-\mu_0\rVert_2^2\le  n\epsilon_n^2, |\sigma^2/\sigma_0^2-1|\le \epsilon_n/\sigma_0\}$, which is further bounded below by $e^{-Cn\epsilon_n^2/\sigma_0^2}$.
	Let $\mathcal B_n=\{(\mu,\sigma):d((\mu,\sigma),(\mu_0,\sigma_0))\ge M\epsilon_n\}$ for a large $M>0$.
	It is easy to see that	
	\begin{align*}
\mathbb E_{\mu_0,\sigma_0} \Pi( \mathcal B_n\mid y) &\le\mathbb E_{\mu_0,\sigma_0} \varphi_n^\ast +   \mathbb E_{\mu_0,\sigma_0} \mathbbm 1(\mathcal A_n(\tilde C^{1/2}\epsilon_n/\sigma_0)^c) \\
&\quad + e^{(2\tilde C+C)n\epsilon_n^2/\sigma_0^2}\mathbb E_{\mu_0,\sigma_0}\int_{\mathcal B_n} (1-\varphi_n^\ast)\frac{p_{\mu,\sigma}}{p_{\mu_0,\sigma_0}}(y)d\Pi(\mu,\sigma),
	\end{align*}
where $\varphi_n^\ast$ denotes the test in Lemma~\ref{lmm:globaltest}.
The first term on the right-hand side tends to zero according to Lemma~\ref{lmm:globaltest}. The second term also goes to zero based on the probability bound of \eqref{eqn:kl}. For the third term, observe by Theorem~\ref{lmm:globaltest} that $\mathbb E_{\mu_0,\sigma_0}\int_{\mathcal B_n} (1-\varphi_n^\ast)({p_{\mu,\sigma}}/{p_{\mu_0,\sigma_0}})(y)d\Pi(\mu,\sigma)\le e^{-KM^2n\epsilon_n^2/\sigma_0^2} + \Pi\{(\mu,\sigma) \notin \Theta_n\}$, which dominates the term $e^{(2\tilde C+C)n\epsilon_n^2/\sigma_0^2}$ if $M$ and $E$ are sufficiently large. This concludes the proof.
\end{proof}

Although Theorem~\ref{thm:cont} provides a general framework for posterior contraction, its conditions are somewhat abstract. By assuming that $\sigma^2$ is a priori independent of $\mu$ and imposing specific conditions on its prior, we derive stronger yet more convenient sufficient conditions.

\begin{theorem}[Posterior contraction; sufficient conditions]
	\label{thm:suff}
		Suppose that $\mu$ and $\sigma^2$ are a priori independent, and that $\sigma_0^2>n^{-B}$ for some constant $B>0$.
	Assume that the prior for $\sigma^2$ has a polynomial tail such that $\Pi\{\sigma^2 > t\}=O(t^{-1})$ for any large $t$, and that its density $g$ is $L$-Lipschitz on $(0,\infty)$.
	For $\epsilon_n>0$ such that $\epsilon_n/\sigma_0\rightarrow 0$ and  $n\epsilon_n^2/\sigma_0^2\ge \log n$, suppose that $g$ satisfies 
		\begin{align}
		g(\sigma_0^2)\ge 2L\sigma_0\epsilon_n,
		\label{eqn:gbo}
	\end{align} 
and that there exists a sieve $\mathcal M_n\subset \mathbb R^n$ such that for some constants $\bar C >0$, $\bar D >0$, and a sufficiently large constant $\bar E >0$,	
	\begin{align}
		\begin{split}
			\Pi\{\lVert\mu-\mu_0\rVert_2^2\le n\epsilon_n^2\}&\ge e^{- \bar C n\epsilon_n^2/\sigma_0^2},\\
			N(\sqrt{n}\epsilon_n,\mathcal M_n,\lVert\cdot\rVert_2)&\le e^{ \bar D n\epsilon_n^2/\sigma_0^2},\\
			\Pi\{\mu \notin \mathcal M_n\} &\le e^{- \bar E n\epsilon_n^2/\sigma_0^2}.
		\end{split}
		\label{eqn:cond}
	\end{align}
	Then, there exists a constant $M>0$ such that the posterior satisfies
	$\mathbb E_{\mu_0,\sigma_0}\Pi\{(\mu,\sigma):d((\mu,\sigma),(\mu_0,\sigma_0))\ge M\epsilon_n\mid y\}\rightarrow 0$.
\end{theorem}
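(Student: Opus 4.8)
The plan is to deduce the conclusion from Theorem~\ref{thm:cont} by exhibiting a joint sieve $\Theta_n\subset\mathbb R^n\times(0,\infty)$ and verifying the three conditions in \eqref{eqn:cond0}. Because $\mu$ and $\sigma^2$ are a priori independent, every prior probability attached to a product event factorizes, which lets me treat the $\mu$-component (already controlled by \eqref{eqn:cond}) separately from the $\sigma$-component. Concretely, I would set $\Theta_n=\mathcal M_n\times(0,\overline\sigma_n]$, where $\overline\sigma_n$ is a large threshold on the standard deviation to be calibrated below. Note that no positive lower cutoff on $\sigma$ is needed: $(0,\overline\sigma_n]$ is already a bounded interval with finite covering number, and its complement carries all the large-variance mass that must be excluded.

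For the prior-mass condition (the first line of \eqref{eqn:cond0}), factorization gives $\Pi\{\lVert\mu-\mu_0\rVert_2^2\le n\epsilon_n^2,\,|\sigma^2-\sigma_0^2|\le\sigma_0\epsilon_n\}=\Pi\{\lVert\mu-\mu_0\rVert_2^2\le n\epsilon_n^2\}\,\Pi\{|\sigma^2-\sigma_0^2|\le\sigma_0\epsilon_n\}$. The first factor is at least $e^{-\bar C n\epsilon_n^2/\sigma_0^2}$ by \eqref{eqn:cond}. For the second I combine the $L$-Lipschitz property of $g$ with \eqref{eqn:gbo}: on the interval $\{|\sigma^2-\sigma_0^2|\le\sigma_0\epsilon_n\}$ (contained in $(0,\infty)$ since $\epsilon_n/\sigma_0\to0$) one has $g(s)\ge g(\sigma_0^2)-L\sigma_0\epsilon_n\ge g(\sigma_0^2)/2\ge L\sigma_0\epsilon_n$, so integrating over the interval of width $2\sigma_0\epsilon_n$ yields $\Pi\{|\sigma^2-\sigma_0^2|\le\sigma_0\epsilon_n\}\ge 2L\sigma_0^2\epsilon_n^2$. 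The crux is then to show that the logarithm of this polynomially small quantity is dominated by $n\epsilon_n^2/\sigma_0^2$: since $\sigma_0^2>n^{-B}$ and $n\epsilon_n^2/\sigma_0^2\ge\log n$ force $\epsilon_n^2\ge n^{-B-1}\log n$, we obtain $-\log(2L\sigma_0^2\epsilon_n^2)\lesssim(2B+1)\log n\le(2B+1)\,n\epsilon_n^2/\sigma_0^2$, so the second factor is bounded below by $e^{-c\,n\epsilon_n^2/\sigma_0^2}$ for a constant $c$ depending on $B$ and $L$. Combining the two factors verifies the first condition with $C=\bar C+c$.

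For the sieve conditions the remaining-mass bound factorizes as well, giving $\Pi\{(\mu,\sigma)\notin\Theta_n\}\le\Pi\{\mu\notin\mathcal M_n\}+\Pi\{\sigma^2>\overline\sigma_n^2\}$. The first term is at most $e^{-\bar E n\epsilon_n^2/\sigma_0^2}$ by \eqref{eqn:cond}; for the second I invoke the polynomial tail $\Pi\{\sigma^2>t\}=O(t^{-1})$, so that choosing $\overline\sigma_n^2=e^{\bar E' n\epsilon_n^2/\sigma_0^2}$ makes it $O(e^{-\bar E' n\epsilon_n^2/\sigma_0^2})$, as small as desired. For the entropy bound I use that $d$ is the Euclidean product of $n^{-1/2}\lVert\cdot\rVert_2$ on $\mu$ and $|\cdot|$ on $\sigma$, so a product of component covers is a joint cover: covering $\mathcal M_n$ in $\lVert\cdot\rVert_2$ at radius $\sqrt n\epsilon_n$ (controlled by \eqref{eqn:cond}) and $(0,\overline\sigma_n]$ at radius $\epsilon_n$ gives $N(\sqrt2\epsilon_n,\Theta_n,d)\le e^{\bar D n\epsilon_n^2/\sigma_0^2}(\overline\sigma_n/\epsilon_n+1)$, whose logarithm is of order $n\epsilon_n^2/\sigma_0^2$ since both $\log\overline\sigma_n$ and $-\log\epsilon_n$ are of that order.

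Two bookkeeping points remain and constitute the only real subtleties. First, the product construction controls $N$ at radius $\sqrt2\epsilon_n$ rather than $\epsilon_n$; I resolve this by applying Theorem~\ref{thm:cont} with $\epsilon_n$ replaced by $\sqrt2\epsilon_n$, which leaves all three conditions intact (the relevant events only enlarge, and the exponents change by harmless constant factors) and yields contraction at rate $M\sqrt2\epsilon_n$, i.e.\ the same rate up to the free constant $M$. Second, enlarging $\overline\sigma_n$ shrinks the tail mass but inflates the $\sigma$-covering number; these pull against each other, but since the covering constant $D$ in \eqref{eqn:cond0} may be arbitrary while $E$ need only be sufficiently large, I can fix $\bar E'\ge\bar E$ so that $E=\min\{\bar E,\bar E'\}$ is large while $D$ stays finite. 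The main work, and the step genuinely using the new hypotheses, is the $\sigma^2$-prior-mass estimate of the second paragraph: condition \eqref{eqn:gbo} is precisely what keeps $g$ from degenerating on the shrinking interval, and the assumptions $\sigma_0^2>n^{-B}$ and $n\epsilon_n^2/\sigma_0^2\ge\log n$ are exactly what convert the polynomially small mass into an exponent of the required order $n\epsilon_n^2/\sigma_0^2$.
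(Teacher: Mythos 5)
Your proposal is correct and follows essentially the same route as the paper: reduce to Theorem~\ref{thm:cont} via a product sieve $\mathcal M_n\times(0,\overline\sigma_n]$ with $\overline\sigma_n$ exponentially large, obtain the $\sigma^2$-prior mass $\gtrsim 2L\sigma_0^2\epsilon_n^2$ from the Lipschitz bound together with \eqref{eqn:gbo}, convert it to the required exponential scale using $\sigma_0^2>n^{-B}$ and $n\epsilon_n^2/\sigma_0^2\ge\log n$, and handle entropy and remaining mass by the product cover and the polynomial tail. The only (immaterial) difference is how the $\sqrt2$ inflation from the product cover is absorbed—you rescale $\epsilon_n$, the paper rescales the sieve.
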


\begin{proof}
	Using \eqref{eqn:gbo} and the Lipschitz continuity of $g$, we observe that
		\begin{align*}
		\Pi\{|\sigma^2-\sigma_0^2|\le \sigma_0\epsilon_n\} \ge  2\sigma_0\epsilon_n \inf_{s:|s-\sigma_0^2|\le\sigma_0\epsilon_n } g(s) \ge 2\sigma_0\epsilon_n[g(\sigma_0^2)-L\sigma_0\epsilon_n]\ge 2L\sigma_0^2\epsilon_n^2 \ge \frac{2L\log n}{n^{2B+1}}.
	\end{align*}
From the a priori independence and the first condition of \eqref{eqn:cond}, the first condition of \eqref{eqn:cond0} holds, since $n\epsilon_n^2/\sigma_0^2\ge \log n$.
Next, we choose $\Theta_n=\sqrt{2}\mathcal M_n\times (0,\sqrt{2}e^{R n\epsilon_n^2/\sigma_0^2}]$ for a large constant $R>0$. Then, $N(\epsilon_n,\Theta_n,d) \le N(\sqrt{n}\epsilon_n,\mathcal M_n,\lVert
\cdot\rVert_2) \times N(\epsilon_n,(0,e^{Rn\epsilon_n^2}],\lvert\cdot\rvert)$,
which is bounded as required by the second condition of \eqref{eqn:cond0}, since $n\epsilon_n^2/\sigma_0^2\ge \log n$.
Finally, observe that $\Pi\{(\mu,\sigma^2)\notin \Theta_n\} \le \Pi\{\mu\notin \sqrt{2} \mathcal M_n\} + \Pi\{\sigma^2 > 2e^{2Rn\epsilon_n^2/\sigma_0^2}\}$. Since the prior of $\sigma^2$ has a polynomially decaying right tail, the last condition of \eqref{eqn:cond0} is satisfied provided that $\bar E$ and $R$ are sufficiently large.
\end{proof}

In summary, the prior for $\sigma^2$ should be $L$-Lipschitz and exhibit a polynomially decaying right tail, and its density at the true $\sigma_0^2$ should satisfy the lower bound in \eqref{eqn:gbo}. The polynomial decay is certainly a mild requirement. If $\sigma_0^2$ is bounded away from both zero and infinity, the condition in \eqref{eqn:gbo} is automatically satisfied whenever $g$ is independent of $n$ and is supported on $(0,\infty)$. To accommodate decreasing or increasing $\sigma_0^2$, we consider the following two widely used priors for variance parameters. Below, $a_n\ll b_n$ means $a_n/b_n\rightarrow 0$.

	\begin{example}[Inverse gamma]
	Suppose that $\epsilon_n/\sigma_0 \le n^{-\xi}$ for some $\xi\in(0,1/2)$, and let $g(s)\propto s^{-a-1}\exp(-b/s)$ denote the inverse gamma density with shape $a>0$ and scale $b>0$. It is easy to see that $g$ is Lipschitz continuous, exhibits a polynomially decaying right tail, and that \eqref{eqn:gbo} holds if $b/(\xi\log n)\le \sigma_0^2\ll n^{\xi/(a+2)}$.
\end{example}
	
	\begin{example}[Half-Cauchy]
Suppose that $\epsilon_n/\sigma_0 \le n^{-\xi}$ for some $\xi\in(0,1/2)$, and let $g(s)\propto (r^2+s^2)^{-1}$ denote the half-Cauchy density with scale $r>0$. It is straightforward to verify that $g$ is Lipschitz continuous, has a polynomially decaying right tail, and that \eqref{eqn:gbo} is satisfied if $\sigma_0^2\ll n^{\xi/2}$.
\end{example}

\section{Application}
\label{sec:example}

\subsection{High-dimensional regression}
\label{sec:highreg}

Consider a high-dimensional regression model with $\mu_0=X\beta_0$, where $X\in\mathbb R^{n\times p}$ is the design matrix and $\beta_0\in\mathbb R^p$ is a sparse vector. In other words, we assume $p>n$ but many components of $\beta_0$ are zero. 
We denote by $\lVert\cdot\rVert_{\infty}$ the maximum norm for both vectors and matrices.
The support of the true signal $S_0$ and its cardinality $|S_0|$ are unknown. When $\sigma_0^2$ is known, the contraction rate of this model was studied by \citet{castillo2015bayesian} under mild conditions. Here, we extend the analysis to the case of an unknown $\sigma_0^2$ under the additional restrictions that $\lVert\beta_0\rVert_\infty=O(\log p)$ and $\lVert X\rVert_\infty=O(\log p)$.
We adopt a prior for $\sigma^2$ that satisfies the conditions for Theorem~\ref{thm:suff}, and assume
that $n^{-B}\le \sigma_0^2\le n^{B}$ for some $B>0$.
We put a prior on the support $S$ such that $\Pi\{S=\tilde S\} \propto \binom{p}{|\tilde S|}^{-1} e^{-A|\tilde S|\log p}$ for a constant $A>0$.
Denoting by $\beta_S$ the components of $\beta$ in $S$, we use the priors $\beta_S\mid S \sim N_{|S|}(0_{|S|},\tau^2 I_{|S|})$ for $\tau^2>0$ and $\beta_{S^c}|S\sim\delta_0$, where $\delta_0$ is the point mass at zero. The target rate is $\epsilon_n=\sigma_0\sqrt{(|S_0|\log p)/n}$. We assume
$|S_0|>0$ and $(|S_0|\log p)/n\rightarrow 0$ such that
$\epsilon_n/\sigma_0\rightarrow 0$ and $n\epsilon_n^2/\sigma_0^2\ge \log n$.

First, observe that
\begin{align*}
\Pi\{\lVert X(\beta-\beta_0) \rVert_2^2 \le n\epsilon_n^2\}&\ge \Pi\{S=S_0\}\Pi\{\lVert X_S(\beta_S-\beta_{0,S}) \rVert_2^2 \le n\epsilon_n^2\mid S=S_0\} \\
&\ge \Pi\{S=S_0\}\Pi\{\lVert \beta_S-\beta_{0,S} \rVert_2^2 \le \epsilon_n^2/(\lVert X\rVert_\infty^2|S|)\mid S=S_0\},
\end{align*}
where the last inequality follows from $\lVert X_S\beta_S\rVert_2\le \sqrt{n|S|}\times\lVert X\rVert_\infty \lVert\beta_S\rVert_2$. Following the calculation in Section~7.7 of \citet{ghosal2007convergence} and using the lower bound for the chi-squared distribution function, $\Pr\{\chi_{k,0}^2 \le t\}\ge (t/2)^{k/2}e^{-t/2}/\Gamma(k/2+1)$ for $t>0$, it can be shown that
\begin{align*}
\Pi\{\lVert \beta_S-\beta_{0,S} \rVert_2^2 \le \epsilon_n^2/(\lVert X\rVert_\infty^2|S|)\mid S=S_0\}&\ge 2^{-|S_0|/2}e^{-\lVert\beta_0\rVert_2^2}\Pi\{\lVert \beta_S \rVert_2^2 \le \epsilon_n^2/(2\lVert X\rVert_\infty^2|S|)\mid S=S_0\}\\
&\ge e^{-C_1 |S_0|\log p},
\end{align*}
for some $C_1>0$. Since $\Pi\{S=S_0\}\ge e^{-C_2 |S_0|\log p}$ for some $C_2>0$, the prior concentration condition follows.
Next, choose the sieve $\mathcal M_n= \{\mu=X_S\beta_S: \lVert \beta \rVert_\infty\le n,|S|\le M_0|S_0|\}$ for a sufficiently large $M_0>0$.
Noting that $\lVert X_S\beta_S\rVert_2\le \sqrt{n}|S|\times \lVert X\rVert_\infty\lVert\beta_S\rVert_\infty$,
we obtain
\begin{align*}
N(\sqrt{n}\epsilon_n, \mathcal M_n ,\lVert\cdot\rVert_2)&\le \sum_{S:|S|\le M_0|S_0|}N\!\left(\frac{\epsilon_n}{\lVert X\rVert_\infty|S|}, \{\lVert \beta_S \rVert_\infty\le n\} ,\lVert\cdot\rVert_\infty\right)\\
&\le \binom{p}{M_0|S_0|} \left(\frac{3\lVert X\rVert_\infty |S_0|n}{\epsilon_n}\right)^{M_0|S_0|}\\
&\le e^{C_3|S_0|\log p},
\end{align*}
for some $C_3>0$, thereby verifying the entropy condition. Lastly, observe that
\begin{align*}
	\Pi\{\mu \notin \mathcal M_n\}& \le \Pi\{|S|>M_0|S_0|\} + \sum_{\tilde S:|\tilde S|\le M_0|S_0|}\Pi\{S=\tilde S\}\Pi\{\lVert\beta_S\rVert_\infty >n \mid S=\tilde S\}\\
	& \le 2e^{-A M_0|S_0|\log p} + 2 M_0|S_0|e^{-n^2/(2\tau^2)},
\end{align*}
which is further bounded by $e^{-C_4 |S_0|\log p}$ for a sufficiently large $C_4$, provided that 
$M_0$ is chosen to be sufficiently large. Therefore, the posterior contraction rate is $\epsilon_n$ with respect to $n^{-1/2}\lVert X(\beta-\beta_0) \rVert_2 + |\sigma-\sigma_0|$. The rate for $\lVert \beta-\beta_0 \rVert_2$ can be obtained by imposing suitable compatibility conditions (see, e.g., \citet{castillo2015bayesian} and \citet{jeong2021posterior}).

\subsection{Adaptive nonparametric regression}
\label{sec:nonreg}

We consider nonparametric regression with the target function $f_0$
belonging to the $\alpha$-smooth H\"older space $\mathcal H^\alpha$ over a bounded domain, for $\alpha>0$ (see Definition~C.4 of \citet{ghosal2017fundamentals}). 
Although various Bayesian nonparametric methods have been proposed, including Gaussian processes \citep{van2008rates}, we adopt a basis expansion approach \citep{shen2015adaptive}. For simplicity, we focus on the univariate case with $p=1$ although our approach can be readily extended to multivariate settings. To approximate $f_0$, we employ B-spline basis functions, though alternative bases, such as the Fourier basis or wavelets \citep{donoho1995wavelet}, can also be used. Let $\psi_J$ be the $q$-degree B-spline basis of dimension $J$ with $K$ interior uniform knots, such that $J=q+K+1$, and let $\beta_J$ be the corresponding coefficients; that is, $f=\psi_J^T\beta_J$ and $\mu=B_J\beta_J$, where $B_J\in\mathbb R^{n\times J}$ is the basis matrix. 
For a precise definition of B-splines and their use in Bayesian nonparametrics, refer to Section~D of \citet{ghosal2017fundamentals} and \citet{shen2015adaptive}.
We assign priors $\Pi\{J=j\}\propto e^{-Aj\log j}$ for $A>0$, $\beta_J\mid J\sim \text{N}_J(0_J,\tau^2 I_J)$ for $\tau^2>0$, and a prior for $\sigma^2$ that satisfies the conditions for Theorem~\ref{thm:suff}. 
The target rate is $\epsilon_n = ((\sigma_0^2\log n)/n)^{\alpha/(2\alpha+1)}$, for which we assume that 	$((\log n)/n)^{2\alpha} \ll \sigma_0^2 \le n /\log n$ so that $\epsilon_n/\sigma_0\rightarrow 0$ and $n\epsilon_n^2/\sigma_0^2\ge \log n$.

Observe that $\lVert\mu-\mu_0\rVert_2 = \sqrt{n}\lVert f-f_0 \rVert_n$, where $\lVert f \rVert_n =\sqrt{ n^{-1}\sum_{i=1}^n|f(x_i)|^2}$ denotes the empirical $L_2$-norm. Classical approximation theory shows that for any $f_0\in\mathcal H^\alpha$, provided that $q+1\ge \alpha$, there exists $\hat\beta_J$ with bounded $\lVert\hat\beta_J\rVert_\infty$ such that $\lVert \psi_J^T\hat\beta_J-f_0\rVert_\infty\le C_1 \lVert f_0\rVert_{\mathcal H^\alpha} J^{-\alpha}$ for some $C_1>0$ \citep[e.g,][]{de1978practical}, where $\lVert\cdot\rVert_\infty$ denotes the supremum norm of a function (with a slight abuse of notation) and $\lVert \cdot\rVert_{\mathcal H^\alpha}$ denotes the H\"older norm. We assume that $\lVert f_0\rVert_{\mathcal H^\alpha}$ is bounded. Since the B-splines satisfy the sum-to-unity property $\psi_J^T 1_J=1$, it follows that $\lVert \psi_J^T\beta_J\rVert_\infty\le \lVert \beta_J\rVert_\infty$.
Therefore, for $\hat J=\lfloor C_2(n/(\sigma_0^2\log n))^{1/(2\alpha+1)}\rfloor$ with a sufficiently large $C_2>0$, 
\begin{align*}
\Pi\{\lVert f-f_0\rVert_n\le \epsilon_n\}&\ge \Pi\{J=\hat J\}\Pi\{\lVert f- \psi_J^T\hat\beta_J
\rVert_\infty\le \epsilon_n/2 \mid J=\hat J\}\\
&\ge \Pi\{J=\hat J\}\Pi\{\lVert \beta_J-\hat \beta_J\rVert_2\le \epsilon_n/(2\sqrt{J})\mid J=\hat J\}.
\end{align*}
Similar to Section~\ref{sec:highreg}, this is bounded below by $e^{-C_3\hat J \log n}$ for some $C_3>0$, since $\lVert\hat\beta_{\hat J}\rVert_\infty$ is bounded. Next, choose the sieve $\mathcal M_n=\{\mu=B_J \beta_J: \lVert \beta_J \rVert_\infty\le n, J\le M_0\hat J\}$ for a sufficiently large $M_0$. Using the inequality
$\lVert B_J\beta_J\rVert_2 = \sqrt{n}\lVert \psi_J^T\beta_J\rVert_n\le \sqrt{n}\lVert\beta_J\rVert_\infty$, the remaining conditions can be easily verified as in Section~\ref{sec:highreg}. Consequently, the contraction rate is $\epsilon_n$ with respect to $\lVert f-f_0 \rVert_n+|\sigma-\sigma_0|$.



\section{Discussion}
\label{sec:disc}
In this study, we establish a local test function for the general Gaussian model in \eqref{eqn:model} and derive sufficient conditions for posterior contraction using a global test based on metric entropy. Although our applications focus on high-dimensional regression and nonparametric regression, the framework also accommodates other models, including sparse mean models and change-point models. The techniques for constructing our local test may be extended to other Gaussian models, such as time series and white noise models (see Sections~D.6 and D.7 of \citet{ghosal2007convergence}).

\section*{Acknowledgment}
This research was supported by the National Research Foundation of Korea (NRF) grant funded by the Korean government (MSIT) (2022R1C1C1006735, RS-2023-00217705).

\bibliographystyle{apalike}
\bibliography{refGauReg.bib}

\begin{thebibliography}{}

\bibitem[Birg{\'e}, 1983]{birge1983robust}
Birg{\'e}, L. (1983).
\newblock Robust testing for independent non identically distributed variables
  and {M}arkov chains.
\newblock In {\em Specifying Statistical Models: From Parametric to
  Non-Parametric, Using Bayesian or Non-Bayesian Approaches}, pages 134--162.
  Springer.

\bibitem[Birg{\'e}, 2006]{birge2006model}
Birg{\'e}, L. (2006).
\newblock Model selection via testing: an alternative to (penalized) maximum
  likelihood estimators.
\newblock {\em Annales de l'Institut Henri Poincar{\'e}, Probabilit{\'e}s et
  Statistiques}, 42(3):273--325.

\bibitem[Castillo et~al., 2015]{castillo2015bayesian}
Castillo, I., Schmidt-Hieber, J., and van~der Vaart, A. (2015).
\newblock Bayesian linear regression with sparse priors.
\newblock {\em The Annals of Statistics}, 43(5):1986--2018.

\bibitem[Castillo and van~der Vaart, 2012]{castillo2012needles}
Castillo, I. and van~der Vaart, A. (2012).
\newblock Needles and straw in a haystack: posterior concentration for possibly
  sparse sequences.
\newblock {\em The Annals of Statistics}, 40(3):1367--1391.

\bibitem[De~Boor, 1978]{de1978practical}
De~Boor, C. (1978).
\newblock {\em A Practical Guide to Splines}, volume~27.
\newblock Springer-Verlag New York.

\bibitem[Donoho et~al., 1995]{donoho1995wavelet}
Donoho, D.~L., Johnstone, I.~M., Kerkyacharian, G., and Picard, D. (1995).
\newblock Wavelet shrinkage: asymptopia?
\newblock {\em Journal of the Royal Statistical Society: Series B
  (Methodological)}, 57(2):301--337.

\bibitem[Ghosal et~al., 2000]{ghosal2000convergence}
Ghosal, S., Ghosh, J.~K., and van~{d}er Vaart, A.~W. (2000).
\newblock Convergence rates of posterior distributions.
\newblock {\em The Annals of Statistics}, 28(2):500--531.

\bibitem[Ghosal and van~{d}er Vaart, 2007]{ghosal2007convergence}
Ghosal, S. and van~{d}er Vaart, A. (2007).
\newblock Convergence rates of posterior distributions for noniid observations.
\newblock {\em The Annals of Statistics}, 35(1):192--223.

\bibitem[Ghosal and van~{d}er Vaart, 2017]{ghosal2017fundamentals}
Ghosal, S. and van~{d}er Vaart, A. (2017).
\newblock {\em Fundamentals of Nonparametric Bayesian Inference}, volume~44.
\newblock Cambridge University Press.

\bibitem[Jeong and Ghosal, 2021a]{jeong2021posterior}
Jeong, S. and Ghosal, S. (2021a).
\newblock Posterior contraction in sparse generalized linear models.
\newblock {\em Biometrika}, 108(2):367--379.

\bibitem[Jeong and Ghosal, 2021b]{jeong2021unified}
Jeong, S. and Ghosal, S. (2021b).
\newblock Unified {B}ayesian theory of sparse linear regression with nuisance
  parameters.
\newblock {\em Electronic Journal of Statistics}, 15(1):3040--3111.

\bibitem[Laurent and Massart, 2000]{laurent2000adaptive}
Laurent, B. and Massart, P. (2000).
\newblock Adaptive estimation of a quadratic functional by model selection.
\newblock {\em The Annals of Statistics}, 28(5):1302--1338.

\bibitem[Le~Cam, 1973]{lecam1973convergence}
Le~Cam, L. (1973).
\newblock Convergence of estimates under dimensionality restrictions.
\newblock {\em The Annals of Statistics}, 1(2):38--53.

\bibitem[Naulet and Barat, 2018]{naulet2018some}
Naulet, Z. and Barat, E. (2018).
\newblock Some aspects of symmetric gamma process mixtures.
\newblock {\em Bayesian Analysis}, 13(3):703--720.

\bibitem[Ning et~al., 2020]{ning2020bayesian}
Ning, B., Jeong, S., and Ghosal, S. (2020).
\newblock Bayesian linear regression for multivariate responses under group
  sparsity.
\newblock {\em Bernoulli}, 26(3):2353--2382.

\bibitem[Polson and Ro{\v{c}}kov{\'a}, 2018]{polson2018posterior}
Polson, N.~G. and Ro{\v{c}}kov{\'a}, V. (2018).
\newblock Posterior concentration for sparse deep learning.
\newblock In {\em Advances in Neural Information Processing Systems},
  volume~31, pages 930--939.

\bibitem[Ro{\v{c}}kov{\'a} and van~der Pas, 2020]{rovckova2020posterior}
Ro{\v{c}}kov{\'a}, V. and van~der Pas, S. (2020).
\newblock Posterior concentration for {B}ayesian regression trees and forests.
\newblock {\em The Annals of Statistics}, 48(4):2108--2131.

\bibitem[Schwartz, 1965]{schwartz1965bayes}
Schwartz, L. (1965).
\newblock On {B}ayes procedures.
\newblock {\em Zeitschrift f{\"u}r Wahrscheinlichkeitstheorie und verwandte
  Gebiete}, 4(1):10--26.

\bibitem[Shen and Ghosal, 2015]{shen2015adaptive}
Shen, W. and Ghosal, S. (2015).
\newblock Adaptive {B}ayesian procedures using random series priors.
\newblock {\em Scandinavian Journal of Statistics}, 42(4):1194--1213.

\bibitem[Song and Liang, 2023]{song2023nearly}
Song, Q. and Liang, F. (2023).
\newblock Nearly optimal {B}ayesian shrinkage for high-dimensional regression.
\newblock {\em Science China Mathematics}, 66(2):409--442.

\bibitem[van~der Vaart and van Zanten, 2008]{van2008rates}
van~der Vaart, A.~W. and van Zanten, J.~H. (2008).
\newblock Rates of contraction of posterior distributions based on {G}aussian
  process priors.
\newblock {\em The Annals of Statistics}, 36(3):1435--1463.

\bibitem[Wainwright, 2019]{wainwright2019high}
Wainwright, M.~J. (2019).
\newblock {\em High-Dimensional Statistics: A Non-Asymptotic Viewpoint},
  volume~48.
\newblock Cambridge University Press.

\bibitem[Yoo and Ghosal, 2016]{yoo2016supremum}
Yoo, W.~W. and Ghosal, S. (2016).
\newblock Supremum norm posterior contraction and credible sets for
  nonparametric multivariate regression.
\newblock {\em The Annals of Statistics}, 44(3):1069--1102.

\end{thebibliography}

\end{document}